\let\oldbibliography\thebibliography
\renewcommand{\thebibliography}[1]{%
  \oldbibliography{#1}%
  \setlength{\itemsep}{2pt}%
}
\newtheorem{theorem}{Theorem}[section]
\newtheorem{proposition}[theorem]{Proposition}
\newtheorem{corollary}[theorem]{Corollary}
\newtheorem*{SPGT}{Strong Perfect Graph Theorem}
\def \QD1 {\hfill $\spadesuit$}
\newcommand{\DF}[1]{{\bf #1\/}}
\newcommand{\set}[2]{\{#1 \;|\; #2 \}}
\newcommand{\ems}{\varnothing}
\newcommand{\sm}{\setminus}
\newcommand{\De}{\Delta}
\newcommand{\de}{\delta}
\newcommand{\pa}{\chi}
\newcommand{\om}{\omega}
\newcommand{\al}{\alpha}
\newcommand{\Ga}{\Gamma}
\newcommand{\p}{\ell}
\newcommand{\cSD}{{\cal S}{\cal D}}
\newcommand{\cMG}{{\cal MG}}
\newcommand{\cGD}{{\cal GD}}
\newcommand{\CO}{{\cal CO}}
\newcommand{\f}{\varphi}
\newcommand{\fin}{\varphi^{-1}}
\newcommand{\has}{\nabla}
\newcommand{\nat}{\mathbb{N}}
\newcommand{\nato}{\mathbb{N}_0}
\newcommand{\ganz}{\mathbb{Z}}
\newcommand{\PP}  {{\sf P}}
\newcommand{\NP}  {{\sf NP}}
\newcommand{\NPC} {{\NP}-complete}
\newcommand{\jou}[4]{{\em #1} {\bf #2} (#3) #4.}
\def \JCTB {J. Combin. Theory \, Ser.~B}
\def \DM {Discrete Math.}
\newcommand{\arxiv}[1]{\href{http://arxiv.org/abs/#1}{\texttt{arXiv:#1}}}
\numberwithin{equation}{section}
\begin{document}
\title{\bf Point partition numbers: \\perfect graphs}

\author{Justus von Postel \qquad Thomas Schweser \qquad  Michael Stiebitz\\[0.5ex]
\small Institute of Mathematics\\[-0.8ex]
\small Technische Universit\"at Ilmenau\\[-0.8ex]
\small D-98684 Ilmenau, Germany\\
\small\tt \{justus.von-postel,thomas.schweser,michael.stiebitz\}@tu-ilmenau.de}
\date{}
\maketitle
\begin{abstract}
Graphs considered in this paper are finite, undirected and without loops, but with multiple edges. For an integer $t\geq 1$, denote by $\mathcal{MG}_t$ the class of graphs whose maximum multiplicity is at most $t$. A graph $G$ is called strictly $t$-degenerate if every non-empty subgraph $H$ of $G$ contains a vertex $v$ whose degree in $H$ is at most $t-1$. The point partition number $\chi_t(G)$ of $G$ is smallest number of colors needed to color the vertices of $G$ so that each vertex receives a color and vertices with the same color induce a strictly $t$-degenerate subgraph of $G$. So $\chi_1$ is the chromatic number, and $\chi_2$ is known as the point aboricity. The point partition number $\chi_t$ with $t\geq 1$ was introduced by Lick and White. If $H$ is a simple graph, then $tH$ denotes the graph obtained from $H$ by replacing each edge of $H$ by $t$ parallel edges. Then $\omega_t(G)$ is the largest integer $n$ such that $G$ contains a $tK_n$ as a subgraph. Let $G$ be a graph belonging to $\cMG_t$. Then $\omega_t(G)\leq \chi_t(G)$ and we say that $G$ is $\chi_t$-perfect if every induced subgraph $H$ of $G$ satisfies $\omega_t(H)=\chi_t(H)$. Based on the Strong Perfect Graph Theorem due to Chudnowsky, Robertson, Seymour and Thomas, we give a characterization of $\chi_t$-perfect graphs of $\mathcal{MG}_t$ by a set of forbidden induced subgraphs. We also discuss some complexity problems for the class of $\chi_t$-critical graphs.
\end{abstract}

\noindent{\small{\bf Mathematics Subject Classification:} 05C15, 05C17, 05C69}

\noindent{\small{\bf Keywords:} Point partition number, degeneracy, perfect graphs}

\section{Introduction}

In this paper we extend the theory of perfect graphs to graphs having multiple edges. For this purpose we replace the chromatic number $\chi$ by the point partition number (respectively $t$-chromatic number) $\pa_t$  introduced in the 1970s by Lick and White \cite{LickW1970}.

\section{Notation for graphs}

For integers $k$ and $\ell$, let $[k,\ell]=\set{x\in \ganz}{k \leq x \leq \ell}$, let $\nat$ be the the set of positive integers and $\nato=\nat \cup \{0\}$. By a \DF{graph} we mean a finite undirected graph with multiple edges, but without loops. For a graph $G$, let $V(G)$ and $E(G)$ denote the \DF{vertex set} and the \DF{edge set} of $G$, respectively. The number of vertices of $G$ is called the \DF{order} of $G$ and is denoted by $|G|$. A graph $G$ is called \DF{empty} if $|G|=0$, in this case we also write $G=\ems$. For a vertex $v\in V(G)$ let $E_G(v)$ denote the set of edges of $G$ \DF{incident} with $v$. Then $d_G(v)=|E_G(v)|$  is the \DF{degree} of $v$ in $G$. As usual, $\de(G)=\min_{v\in V(G)}d_G(v)$ is the \DF{minimum degree} and $\De(G)=\max_{v\in V(G)}d_G(v)$ is the \DF{maximum degree} of $G$. For different vertices $u, v$ of $G$, let $E_G(u,v)=E_G(u)\cap E_G(v)$ be the set of edges incident with $u$ and $v$. If $e\in E_G(u,v)$ then we also say that $e$ is an edge of $G$ \DF{joining} $u$ and $v$. Furthermore, $\mu_G(u,v)=|E_G(u,v)|$ is the \DF{multiplicity} of the vertex pair $u,v$; and $\mu(G)=\max_{u\not=v}\mu_G(u,v)$ is the \DF{maximum multiplicity} of $G$. The graph $G$ is said to be \DF{simple} if $\mu(G)\leq 1$. For $X,Y \subseteq V(G)$, denote by $E_G(X,Y)$ the set of all edges of $G$ joining a vertex of $X$ with a vertex of $Y$. If $G'$ is a \DF{subgraph} of $G$, we write $G'\subseteq G$. The \DF{subgraph} of $G$ \DF{induced by} the vertex set $X$ with $X\subseteq V(G)$ is denoted by $G[X]$, i.e., $V(G[X])=X$ and $E(G[X])=E_G(X,X)$. Furthermore, $G-X=G[V(G)\sm X]$. For a vertex $v$, let $G-v=G-\{v\}$. For $F\subseteq E(G)$, let $G-F$ denote the subgraph of $G$ with vertex set $V(G)$ and edge set $E(G)\sm F$. For an edge $e\in E(G)$, let $G-e=G-\{e\}$. We denote by $K_n$ the \DF{complete graph} of order $n$ with $n\geq 0$, and by $C_n$ the \DF{cycle} of order $n$ with $n\geq 2$. A cycle is called \DF{odd} or \DF{even} depending on whether its order is odd or even. The order of a cycle is also called its \DF{length}.

\section{Point partition numbers}

In what follows let $t\in \nat$. Denote by $\cMG_t$ the class of graphs $G$ with $\mu(G)\leq t$. So $\cMG_1$ is the class of simple graphs. If $G$ is a graph, then $H=tG$ denotes the graph obtained from $G$ by replacing each edge of $G$ by $t$ parallel edges, that is, $V(H)=V(G)$ and for any two different vertices $u,v\in V(G)$ we have $\mu_H(u,v)=t\mu_G(u,v)$. The graph $H=tG$ is called a \DF{$t$-uniform inflation} of $G$.

Let $G$ be an arbitrary graph. We call $G$ \DF{strictly $t$-degenerate} if every non-empty subgraph $H$ of $G$ has a vertex $v$ such that $d_{H}(v)\leq t-1$. Let $\cSD_t$ denote the class of strictly $t$-degenerate graphs. Clearly, $\cSD_1$ is the class of edgeless graphs, and $\cSD_2$ is the class of forests.

A \DF{coloring} of $G$ with \DF{color set} $\Ga$ is a mapping $\f:V(G) \to \Ga$ that assigns to each vertex $v\in V(G)$ a \DF{color} $\f(v)\in \Ga$. For a color $c\in \Ga$, the preimage $\fin(c)=\set{v\in V(G)}{\f(v)=c}$ is called a \DF{color class} of $G$ with respect to $\f$. A subgraph $H$ of $G$ is called \DF{monochromatic} with respect to $\f$ if $V(H)$ is a subset of a color class of $G$ with respect to $\f$.

A coloring $\f$ of $G$ with color set $\Ga$ is called an \DF{$\cSD_t$-coloring} of $G$ if for each color $c\in \Ga$ the subgraph of $G$ induced by the color class $\fin(c)$ belongs to $\cSD_t$. We denote by $\CO_t(G,k)$ the set of $\cSD_t$-colorings of $G$ with color set $\Ga=[1,k]$. The \DF{point partition number} $\pa_t(G)$ of the graph $G$ is defined as the least integer $k$ such that $\CO_t(G,k)\not=\ems$. Note that $\pa_1$ equals the \DF{chromatic number} $\chi$, and $\pa_2$ is known as the \DF{point aboricity}.

The graph classes $\cSD_t$ and the corresponding coloring parameters $\pa_t$ with $t\geq 1$ were introduced in 1970 by Lick and White \cite{LickW1970}. Bollob\'as and Manvel \cite{BollobasM79} used the term \DF{$t$-chromatic number} for the parameter $\pa_t$. The point aboricity $\pa_2$ was already introduced in 1968 by Hedetniemi \cite{Hedetniemi68}.

Clearly, an $\cSD_t$-coloring of a graph $G$ induces an $\cSD_t$-coloring with the same color set for each of its subgraphs, and so
\begin{equation}
\label{Equ:monotone}
H\subseteq G \mbox{ implies } \pa_t(H)\leq \pa_t(G).
\end{equation}
Furthermore, it is easy to check that if we delete a vertex or an edge from a graph, then its $t$-chromatic number decreases by at most one. Clearly, $sK_2\not\in \cSD_t$ if and only if $s\geq t$. Consequently, if a graph $G$ has a vertex pair $(u,v)$ such that $\mu_G(u,v)\geq t+1$ and $e\in E_G(u,v)$, then $\pa_t(G-e)=\pa_t(G)$. So it suffices to investigate the $t$-chromatic number for graphs belonging to $\cMG_t$. Furthermore, if $G$ is a simple graph, then $tG\in \cMG_t$ and a coloring $\f$ of $G$ with color set $\Ga=[1,k]$ satisfies
\begin{equation}
\label{Equ:co1=cot}
\f\in \CO_1(G,k) \mbox{ if and only if } \f\in \CO_t(tG,k),
\end{equation}
which implies that
\begin{equation}
\label{Equ:pa1=pat}
\pa(G)=\pa_t(tG).
\end{equation}

Let $G$ be a graph. A vertex set $X\subseteq V(G)$ is called an \DF{$\cSD_t$-set} of $G$ if $G[X]$ belongs to $\cSD_t$. In particular, an $\cSD_1$-set is also called an \DF{independent set} of $G$. Let $\al_t(G)$ be the maximum cardinality of an $\cSD_t$-set of $G$. Note that $\pa_t(G)$ is the least integer $k$ such that $V(G)$ has a partition into $k$ sets each of which is an  $\cSD_t$-set of $G$.
Consequently, $G$ satisfies
\begin{equation}
\label{Equ:pa_t+al_t}
|G|\leq \pa_t(G) \al_t(G).
\end{equation}
We call $X\subseteq V(G)$ a \DF{$t$-fold clique} of $G$ if $\mu_G(u,v)\geq t$ for any pair $(u,v)$ of distinct vertices of $X$. A $1$-fold clique of $G$ is also called a \DF{clique} of $G$. Let $\om_t(G)$ the maximum cardinality of a $t$-fold clique of $G$. If $G\in \cMG_t$, then a vertex set $X\subseteq V(G)$ of cardinality $n$ is a $t$-fold clique of $G$ if and only if $G[X]$ is a $tK_n$. Furthermore, \eqref{Equ:monotone} and \eqref{Equ:pa1=pat} implies that
\begin{equation}
\label{Equ:omt<=pat}
\om_t(G)\leq \pa_t(G).
\end{equation}

A graph $G\in \cMG_t$ is called \DF{$\pa_t$-perfect} if every induced subgraph $H$ of $G$ satisfies $\pa_t(H)=\om_t(H)$.

If a graph $G$ is $\pa_t$-perfect, then not only $G$, but all its induced subgraphs fulfill a min-max equation. This is one of the reasons why $\pa_t$-perfect graphs are interesting objects for graph theorists. The study of $\pa_1$-perfect graphs has attracted a lot of attention over the last six decades and was mainly motivated by a conjecture proposed in the 1960s by Berge \cite{Berge63}. However, it took more than forty years until Chudnovsky, Robertson, Seymour, and Thomas \cite{CRST06} succeeded in proving this conjecture; the result is now commonly known as the Strong Perfect Graph Theorem (SPGT). The SPGT provides a characterization of $\pa_1$-perfect graphs by forbidden induced subgraphs.

\begin{SPGT}
A simple graph is $\pa_1$-perfect if and only if it contains no odd cycle of length at least five, or its complement, as an induced subgraph.
\end{SPGT}

\section{Characterizing $\pa_t$-perfect graphs}

We need some more notation. Let $G$ be a graph belonging to $\cMG_t$. We call a graph $H$ the \DF{$t$-complement} of $G$, written $H=\overline{G}^t$, if $V(H)=V(G)$, $E(G)\cap E(H)=\ems$, and $\mu_H(u,v)+\mu_G(u,v)=t$ for every pair $(u,v)$ of distinct vertices of $G$. In particular, for $t=1$,
the $1$-complement is the ordinary \DF{complement} $\overline{G}$ of the simple graph $G$. Clearly, $H=\overline{G}^t$ if and only if $G=\overline{H}^t$, and $H$ is an induced subgraph of $G$ if and only if $\overline{H}^t$ is an induced subgraph of $\overline{G}^t$. If $G$ has order $n$, then $G\cup \overline{G}^t=tK_n$. Furthermore, let $S_t(G)$ denote the simple graph with vertex set $V(S_t(G))=V(G)$ and edge set $E(S_t(G))=\set{uv}{\mu_G(u,v)=t}$. Note that $G$ satisfies
\begin{equation}
\label{Equ:om_t(S)=om}
\om_t(G)=\om_1(S_t(G)).
\end{equation}

\begin{theorem}
\label{Theorem:pa2-perfect}
For any graph $G\in \cMG_2$ the following statements are equivalent:
\begin{itemize}
\item[{\rm (a)}] The graph $G$ is $\pa_2$-perfect.
\item[{\rm (b)}] The graph $S_2(G)$ is $\pa_1$-perfect and the graph $G$ contains no cycle of length at least three as an induced subgraph.
\item[{\rm (c)}] The graph $S_2(G)$ contains no odd cycle of length at least five, or its complement, as an induced subgraph, and the graph $G$ contains no cycle of length at least three as an induced subgraph.
\item[{\rm (d)}] $G$ contains no induced subgraph $H$ such that $S_2(H)$ is an odd cycle of length at least five, or its complement, or $H$ is a cycle of length at least three.
\end{itemize}
\end{theorem}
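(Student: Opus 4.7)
The plan is to reduce everything to the Strong Perfect Graph Theorem applied to the simple graph $S_2(G)$, pivoting on a single technical claim that I would establish first: \emph{if $H\in \cMG_2$ contains no induced cycle of length at least three, then $\pa_2(H)=\pa_1(S_2(H))$.} Together with equation~\eqref{Equ:om_t(S)=om}, which reads $\om_2(H)=\om_1(S_2(H))$, this claim makes (a)$\Leftrightarrow$(b) almost immediate. The equivalence (b)$\Leftrightarrow$(c) is the SPGT applied to $S_2(G)$. The equivalence (c)$\Leftrightarrow$(d) is a matter of unwinding the correspondence between induced subgraphs: every induced subgraph of $S_2(G)$ has the form $S_2(G[X])=S_2(H)$ for the induced subgraph $H=G[X]$ of $G$, so forbidding induced odd holes and antiholes of length $\geq 5$ in $S_2(G)$ is the same as forbidding in $G$ any induced $H$ with $S_2(H)$ of that shape.

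For the claim, both inequalities exploit that $2K_2\notin \cSD_2$ (the two vertices both have degree $2$, violating strict $2$-degeneracy). For $\pa_2(H)\geq\pa_1(S_2(H))$: in any $\cSD_2$-coloring of $H$, a color class $C$ cannot contain a pair of vertices joined by a double edge, since then $2K_2\subseteq H[C]$ would contradict $H[C]\in\cSD_2$; hence $C$ is independent in $S_2(H)$, and the coloring is proper for $S_2(H)$. For $\pa_2(H)\leq\pa_1(S_2(H))$: given a proper coloring of $S_2(H)$, each color class $C$ is independent in $S_2(H)$, so $H[C]$ has no double edge and is a simple graph; if $H[C]$ had a cycle, its shortest cycle would be chordless, giving an induced $C_k$ of $H[C]$ with $k\geq 3$, which by transitivity of induced subgraphs would also be induced in $H$, contradicting the hypothesis. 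Hence each $H[C]$ is a forest, i.e., belongs to $\cSD_2$.

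For (a)$\Rightarrow$(b): if $G$ is $\pa_2$-perfect, then $G$ contains no induced $C_k$ with $k\geq 3$, because such a $C_k$ is simple (so $\om_2(C_k)=1$) while $\pa_2(C_k)=2$ since $C_k$ is not a forest; the claim and \eqref{Equ:om_t(S)=om} then propagate $\pa=\om$ from $G$ to $S_2(G)$. The converse (b)$\Rightarrow$(a) chains the same identities in the opposite order, using that $S_2(H)$ is induced in $S_2(G)$ for every induced $H\subseteq G$. The main obstacle is the $\leq$ direction of the claim: one must verify carefully that an induced cycle in the simple graph $H[C]$ genuinely descends to an induced cycle of $H$ as a multigraph, so that the no-induced-cycle hypothesis on $H$ propagates to each color class; this is also the step that couples the two conditions in (b) and explains why both are needed separately. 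Everything else is bookkeeping or a direct appeal to the SPGT.
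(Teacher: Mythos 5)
Your proposal is correct and follows essentially the same route as the paper: both reduce to the SPGT on $S_2(G)$ via the observations that a double edge forces its endpoints into different classes of any $\cSD_2$-coloring, and that a color class without double edges fails to be in $\cSD_2$ only if it contains a chordless (hence induced) cycle of length at least three. Your packaging of this as the standalone identity $\pa_2(H)=\pa_1(S_2(H))$ for graphs without induced cycles is a clean reorganization of the same argument (it is essentially the content of Corollary~\ref{Corollary:S2-col=gcol}), not a different method.
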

\begin{proof}
To show that (a) implies (b), suppose that $G$ is $\pa_2$-perfect. If $S_2(G)$ is not $\pa_1$-perfect, then there exists an induced subgraph $H$ of $S_2(G)$ such that $\om_1(H)<\pa_1(H)$. For $X=V(H)$, we have $S_2(G[X])=H$. Then it follows from \eqref{Equ:om_t(S)=om} that
$$\om_2(G[X])=\om_1(S_2(G[X]))=\om_1(H)<\pa_1(H)=\pa_1(S_2(G[X]))\leq \pa_2(G[X]),$$
which implies that $G$ is not $\pa_2$-perfect, a contradiction. If $G$ contains a cycle $C_n$ with $n\geq 3$ as an induced subgraph, then $\om_2(C_n)=1<2=\pa_2(C_n)$, and so $G$ is not $\pa_2$-perfect, a contradiction, too. This shows that (a) implies (b). To show the converse implication, suppose that $S_2(G)$ is $\pa_1$-perfect, but $G$ is not $\pa_2$-perfect. Our aim is to show that $G$ contains a cycle $C_n$ with $n\geq 3$ as an induced subgraph. Since $G$ is not $\pa_2$-perfect, there is an induced subgraph $H$ of $G$ such that $\om_2(H)<\pa_2(H)$. Let $k=\om_2(H)=\om_1(S_2(H))$ (see \eqref{Equ:om_t(S)=om}). Clearly, $S_2(H)$ is an induced subgraph of $S_2(G)$, and so $S_2(H)$ is $\pa_1$-perfect as $S_2(G)$ is $\pa_1$-perfect. Hence there is a coloring $\f\in \CO_1(S_2(H),k)$. Then \eqref{Equ:co1=cot} implies that $\f\in \CO_2(2S_2(H),k)$. As $k<\pa_2(H)$, $\f\not\in \CO_2(H,k)$ and so there is a color $c\in [1,k]$ such that $H[\fin(c)]\not\in \cSD_2$. As $\f\in \CO_1(S_2(H),k)$, $H[\fin(c)]$ contains a cycle, but no $C_2$. Consequently, $H[\fin(c)]$ contains an induced cycle of length at least three, which is also an induced cycle of $G$. This completes the proof that (b) implies (a). The equivalence of (b) and (c) follows from the SPGT, the equivalence of (c) and (d) is evident.
\end{proof}

Statement (d) of the above theorem provides a characterization of the class of $\pa_t$-perfect graphs by an infinite family of forbidden induced subgraphs. As an immediate consequence of the proof of Theorem~\ref{Theorem:pa2-perfect} we obtain the following two corollaries.

\begin{corollary}
\label{Corollary:S2-col=gcol}
Let $G\in \cMG_2$ be a $\pa_2$-perfect graph. Then every $\cSD_1$-coloring of $S_2(G)$ is a $\cSD_2$-coloring of $G$ with the same color set.
\end{corollary}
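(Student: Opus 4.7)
The plan is to reduce the claim directly to the equivalence (a)$\Leftrightarrow$(b) of Theorem~\ref{Theorem:pa2-perfect}: because $G\in\cMG_2$ is $\pa_2$-perfect, we know that $G$ contains no induced cycle of length at least three. I would then show, color class by color class, that this structural restriction forces every $\cSD_1$-coloring of $S_2(G)$ to be an $\cSD_2$-coloring of $G$.

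Concretely, I would fix any $\cSD_1$-coloring $\f$ of $S_2(G)$ with color set $\Ga$, pick an arbitrary color $c\in \Ga$, and examine the induced subgraph $G[\fin(c)]$. Since $\fin(c)$ is independent in $S_2(G)$, the definition of $S_2$ gives $\mu_G(u,v)\neq 2$ for every pair of distinct $u,v\in \fin(c)$; combined with $G\in\cMG_2$ this forces $\mu_G(u,v)\in\{0,1\}$, so $G[\fin(c)]$ is a simple graph. To conclude that $G[\fin(c)]\in\cSD_2$, I would use the standard fact that a simple graph is strictly $2$-degenerate iff it is a forest, and then rule out cycles: any cycle in the simple graph $G[\fin(c)]$ has length at least three and contains an induced cycle of length at least three, which would then also be an induced cycle of $G$, contradicting Theorem~\ref{Theorem:pa2-perfect}. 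Hence every color class of $\f$ induces a subgraph in $\cSD_2$, and $\f$ is indeed an $\cSD_2$-coloring of $G$ with color set $\Ga$.

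I do not foresee a genuine obstacle: the argument is essentially extracted from the proof of (b)$\Rightarrow$(a) in Theorem~\ref{Theorem:pa2-perfect}, where exactly the same step was used to derive a contradiction. The only point requiring mild care is the passage ``cycle in $G[\fin(c)]$ $\Rightarrow$ induced cycle of $G$ of length at least $3$'': the simplicity of $G[\fin(c)]$ rules out cycles of length $2$, and transitivity of the induced-subgraph relation transfers the induced cycle of $G[\fin(c)]$ back to $G$.
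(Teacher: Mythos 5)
Your proof is correct and follows essentially the same route as the paper, which presents this corollary as an immediate consequence of the proof of Theorem~\ref{Theorem:pa2-perfect}: a color class independent in $S_2(G)$ induces a simple subgraph of $G$, and any cycle there would yield an induced cycle of length at least three in $G$, contradicting statement (b). The only difference is cosmetic — you run the argument directly rather than inside the contradiction setup of the implication (b)$\Rightarrow$(a).
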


\begin{corollary}
\label{Corollary:Oreconstruction}
Let $G\in \cMG_2$ be graph. Then $G$ is $\pa_2$-perfect, or there are three distinct vertices $u,v$ and $w$ of $G$ such that $\mu_G(u,v)\leq 1, \mu_G(v,w)\leq 1$, and $\mu_G(u,w)\geq 1$.
\end{corollary}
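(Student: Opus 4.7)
The plan is to prove the contrapositive: assuming $G$ is not $\pa_2$-perfect, I will exhibit three vertices $u,v,w$ with the required multiplicity pattern. By the equivalence (a)$\Leftrightarrow$(d) of Theorem~\ref{Theorem:pa2-perfect}, at least one of three configurations arises as an induced subgraph of $G$: (i) a cycle $C_n$ with $n\ge 3$; (ii) an induced subgraph $H$ with $S_2(H)$ an odd cycle of length $\ge 5$; or (iii) an induced subgraph $H$ with $S_2(H)$ the complement of such a cycle. I will handle each case by an explicit choice of the triple.

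In case (i), let $v_1v_2\cdots v_n$ be the induced cycle. If $n=3$, the three cycle vertices already work, since every pair has $\mu_G=1$. If $n\ge 4$, I take $(u,v,w)=(v_1,v_3,v_2)$; the inducedness of the cycle forces $\mu_G(v_1,v_3)=0$, $\mu_G(v_2,v_3)=1$, and $\mu_G(v_1,v_2)=1$, which satisfies the inequalities.

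In case (ii), labelling the induced cycle in $S_2(G)$ as $v_1,\ldots,v_n$, cycle-consecutive pairs translate to $\mu_G=2$ and non-consecutive pairs to $\mu_G\le 1$. I choose $(u,v,w)=(v_1,v_4,v_2)$: the pair $v_1,v_2$ is consecutive, giving $\mu_G(v_1,v_2)=2\ge 1$, and because $n\ge 5$ the vertex $v_4$ is non-consecutive to both $v_1$ and $v_2$, hence $\mu_G(v_1,v_4),\mu_G(v_2,v_4)\le 1$. In case (iii) the roles invert: non-edges of $S_2(H)$ (consecutive indices in the underlying $C_n$) correspond to $\mu_G\le 1$, while edges (non-consecutive indices) correspond to $\mu_G=2$. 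I therefore take $(u,v,w)=(v_1,v_2,v_3)$; the consecutive pairs give $\mu_G(v_1,v_2),\mu_G(v_2,v_3)\le 1$, while $v_1$ and $v_3$ are at cycle-distance $2$, which is non-consecutive once $n\ge 5$, so $\mu_G(v_1,v_3)=2\ge 1$.

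The only subtle point is case (ii): the naive attempt of picking three vertices consecutively along the cycle of $S_2(G)$ would force two of the three multiplicities to equal $2$, violating the $\le 1$ conditions. The hypothesis $n\ge 5$ is exactly what lets me select a ``remote'' vertex $v_4$ whose cycle-distance from both $v_1$ and $v_2$ exceeds one; in the shorter cases $n=3,4$ no such choice exists, which is consistent with $2C_3$ and $2C_4$ being $\pa_2$-perfect.
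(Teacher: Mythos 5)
Your proof is correct and follows the route the paper intends: the corollary is stated as an immediate consequence of Theorem~\ref{Theorem:pa2-perfect}(d), and you simply make explicit the extraction of a suitable triple $(u,v,w)$ from each of the three forbidden configurations (with the correct split $n=3$ versus $n\ge 4$ for induced cycles, and the correct use of $n\ge 5$ for the odd holes and antiholes of $S_2(G)$). No gaps.
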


Corollary~\ref{Corollary:S2-col=gcol} implies that there is a polynomial time algorithm that computes for a given $\pa_2$-perfect graph $G$ an optimal $\cSD_2$-coloring of $G$. Clearly, we can compute the simple graph $G'=S_2(G)$ in polynomial time and Theorem~\ref{Theorem:pa2-perfect} implies that $G'$ is $\pa_1$-perfect. Then it follows from a result by Gr\"otschel, Lov\'asz, and Schrijver \cite{GLS1981} that an optimal $\cSD_1$-coloring $\f$ of $G'$ can be computed in polynomial time. Then $\f$ is an optimal $\cSD_2$-coloring of $G$ (by Corollary~\ref{Corollary:S2-col=gcol} and \eqref{Equ:om_t(S)=om}).

Corollary~\ref{Corollary:Oreconstruction} is interesting as we wanted to use it to proof a Haj\'os-type result for the point aboricity $\pa_2$. In 1961 Haj\'os \cite{Hajos61} proved that, for any fixed integer $k\geq 3$, a simple graph $G$ has chromatic number at least $k$ if and only if $G$ contains a
\DF{$k$-constructible} subgraph, that is, a graph that can be obtained from disjoint copies of $K_k$ by repeated application of the Haj\'os join and the identification of non-adjacent vertices. Recall that the \DF{Haj\'os join} of two disjoint graphs $G_1$ and $G_2$ with edges $u_1v_1\in E(G_1)$ and $u_2v_2 \in E(G_2)$ is the graph $G$ obtained from the union $G_1 \cup G_2$ by deleting both edges $u_1v_1$ and $u_2v_2$, identifying $v_1$ with $v_2$, and adding the new edge $u_1u_2$; we then write $G=G_1\has G_2$. The "if" implication of Haj\'os' theorem follows from the facts that $\pa_1(K_k)=k$, $\pa_1(G_1\has G_1)\geq \max \{\pa_1(G_1),\pa_2(G_2)\}$ (provided that $E(G_i)\not=\ems$), and
$\pa_1(G/I)\geq \pa_1(G)$, where $G/I$ denotes the (simple) graph obtained from $G$ by identifying an independent set $I$ of $G$ to a single vertex. The proof of the "only if" implication is by reductio ad absurdum. So we consider a simple graph $G$ with $\pa_1(G)\geq k$ and without a $k$-constructible subgraph. The graph $G$ may be assumed to be maximal in the sense that the addition of any edge $e\in E(\overline{G})$ to $G$ gives rise to a $k$-constructible subgraph $G_e$ of $G+e$ with $e\in E(G_e)$. If non-adjacency is  an equivalence relation on $V(G)$, then the number of equivalence classes is at least $k$ (as $\pa_1(G)\geq k$), which implies that $G$ contains a $K_k$, a contradiction. Therefore, there are three vertices $u,v$ and $w$ such that $uv, vw\in E(\overline{G})$ and $uw\in E(G)$. Then there are two $k$-constructible graphs $G_{uv}$ and $G_{vw}$. Now let $G'=(G_{uv}-uv) \cup (G_{vw}-vw)+uw$. Then $G'$ is a subgraph of $G$ which can be obtained from disjoint copies of $G_{uv}$ and $G_{vw}$ by removing the copies of the edges $uv$ and $vw$, identifying the two copies of $v$ and adding the copy of the edge $uw$. Then, for each vertex $x$ belonging to both $G_{uv}$ and $G_{vw}$ we identify the two copies of $x$, thereby obtaining the $k$-constructible subgraph $G'$ of $G$, a contradiction.

That the Haj\'os join well behaves with respect to the point aboricity $\pa_2$ was proved by the authors in \cite{PostelSS2020}; the Haj\'os join not only preserves the point aboricity, but also criticality. So we were hopeful to establish a counterpart of Haj\'os' theorem for the point aboricity, with $2K_k$ as the basic graphs. For the proof of the "only if" implication we could use Corollary~\ref{Corollary:Oreconstruction}. However, we were not able to control the identification operation for graphs in $\cMG_2$ to handle the "if" implication. So we did not succeed in finding a constructive characterization for the class of graphs $G\in \cMG_2$ with $\pa_2(G)\geq k$.

\medskip

The proof of Theorem~\ref{Theorem:pa2-perfect} can easily be extended to obtain a characterization for the class of $\pa_t$-perfect graphs with $t\geq 3$ by a family of forbidden induced subgraphs. For $t\in \nat$ with $t\geq 2$, let $\cGD_t$ denote the class of connected graphs $G\in \cMG_{t-1}$ with $\de(G)\geq t$. Note that $\cGD_t \cap \cSD_t=\ems$.

\begin{theorem}
\label{Theorem:pat-perfect}
Let $t\in \nat$ with $t\geq 2$. For any graph $G\in \cMG_t$ the following statements are equivalent:
\begin{itemize}
\item[{\rm (a)}] The graph $G$ is $\pa_t$-perfect.
\item[{\rm (b)}] The graph $S_t(G)$ is $\pa_1$-perfect and no induced subgraph of $G$ belongs to
$\cGD_t$.
\item[{\rm (c)}] $G$ contains no induced subgraph $H$ such that $S_2(H)$ is an odd cycle of length at least five, or its complement, or $H\in \cGD_t$.
\end{itemize}
\end{theorem}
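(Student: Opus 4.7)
The plan is to follow the structure of the proof of Theorem~\ref{Theorem:pa2-perfect} and establish $(a)\Leftrightarrow (b)$ directly, then deduce $(b)\Leftrightarrow (c)$ from the SPGT applied to $S_t(G)$ (reading statement $(c)$ with the evident correction $S_2\to S_t$). The two ingredients driving the argument are, first, the identity $\om_t(H)=\om_1(S_t(H))$ from \eqref{Equ:om_t(S)=om}, and second, the observation that $tK_2\notin \cSD_t$, so in any $\cSD_t$-coloring of a graph $H$ the endpoints of any vertex pair $(u,v)$ with $\mu_H(u,v)=t$ must receive different colors. The latter immediately gives that every $\f\in \CO_t(H,k)$ is also in $\CO_1(S_t(H),k)$, hence $\pa_1(S_t(H))\leq \pa_t(H)$.

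For $(a)\Rightarrow (b)$, assume $G$ is $\pa_t$-perfect. Any failure of $\pa_1$-perfection of $S_t(G)$ produces an induced subgraph $H\subseteq S_t(G)$ with $\om_1(H)<\pa_1(H)$; setting $X=V(H)$ we have $S_t(G[X])=H$, and the chain
\[
\om_t(G[X])=\om_1(S_t(G[X]))<\pa_1(S_t(G[X]))\leq \pa_t(G[X])
\]
contradicts $\pa_t$-perfection. On the other hand, an induced subgraph $H\in \cGD_t$ satisfies $\mu(H)\leq t-1$ and hence $\om_t(H)=1$, while $\de(H)\geq t$ forces $H\notin \cSD_t$ and so $\pa_t(H)\geq 2$, contradicting $\pa_t$-perfection as well.

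For $(b)\Rightarrow (a)$, suppose $S_t(G)$ is $\pa_1$-perfect and no induced subgraph of $G$ belongs to $\cGD_t$, and assume toward contradiction that some induced subgraph $H$ of $G$ satisfies $k:=\om_t(H)=\om_1(S_t(H))<\pa_t(H)$. Because $S_t(H)$ is an induced subgraph of $S_t(G)$, it is $\pa_1$-perfect, so $\pa_1(S_t(H))=k$ and one picks $\f\in \CO_1(S_t(H),k)$. Since $k<\pa_t(H)$, one has $\f\notin \CO_t(H,k)$, so some color class $\fin(c)$ gives $H':=H[\fin(c)]\notin \cSD_t$. By construction of $\f$, no pair of same-color vertices is adjacent in $S_t(H)$, which means $\mu_{H'}(u,v)\leq t-1$ for all distinct $u,v\in \fin(c)$, i.e.\ $H'\in \cMG_{t-1}$. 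Now $H'\notin \cSD_t$ produces a non-empty subgraph $J\subseteq H'$ with $\de(J)\geq t$; passing to the induced subgraph $H'[V(J)]$ only raises degrees, and any connected component $K$ of $H'[V(J)]$ is then a connected graph in $\cMG_{t-1}$ with $\de(K)\geq t$, hence $K\in \cGD_t$. Since $K$ is induced in $H$ and hence in $G$, this contradicts the hypothesis, so no bad $H$ exists and $G$ is $\pa_t$-perfect. Finally $(b)\Leftrightarrow (c)$ is immediate from the SPGT, since every induced subgraph of $S_t(G)$ has the form $S_t(G[Y])$ for some $Y\subseteq V(G)$.

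The step I expect to be the main obstacle is the extraction of the subgraph $K\in \cGD_t$ in the $(b)\Rightarrow(a)$ direction: one must simultaneously secure connectedness, the multiplicity bound $\mu(K)\leq t-1$, and the minimum-degree bound $\de(K)\geq t$, and moreover have $K$ appear as an \emph{induced} subgraph of $G$. The multiplicity bound is free from $\f\in \CO_1(S_t(H),k)$; the move from a witness subgraph $J$ to the induced subgraph $H'[V(J)]$ is needed to make things induced without losing the minimum-degree lower bound; and connectedness is secured by restricting to a component, which preserves $\de\geq t$ because all edges incident to a vertex of a component remain inside that component. Once these three steps are cleanly combined, the rest of the proof is a direct adaptation of the $t=2$ argument.
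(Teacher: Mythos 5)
Your proposal is correct and follows essentially the same route as the paper's proof: the same chain of inequalities via $\om_t=\om_1\circ S_t$ for (a)$\Rightarrow$(b), the same transfer of an optimal $\cSD_1$-coloring of $S_t(H)$ to $H$ for (b)$\Rightarrow$(a), and SPGT for (b)$\Leftrightarrow$(c). Your explicit extraction of the component $K$ (witness subgraph $J$ with $\de(J)\geq t$, pass to $H'[V(J)]$, take a component) merely fills in the step the paper compresses to ``then we conclude,'' and your reading of (c) with $S_t$ in place of $S_2$ matches the evidently intended statement.
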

\begin{proof}
To show that (a) implies (b), suppose that $G$ is $\pa_t$-perfect. If $S_t(G)$ is not $\pa_1$-perfect, then there exists an induced subgraph $H$ of $S_t(G)$ such that $\om_1(H)<\pa_1(H)$. For $X=V(H)$, we have $S_t(G[X])=H$. Then it follows from \eqref{Equ:om_t(S)=om} that
$$\om_t(G[X])=\om_1(S_t(G[X]))=\om_1(H)<\pa_1(H)=\pa_1(S_t(G[X]))\leq \pa_t(G[X]),$$
which implies that $G$ is not $\pa_t$-perfect, a contradiction. If $G$ has an induced subgraph $H\in \cGD_t$, then $\om_t(H)=1$ (as $H\in \cMG_{t-1}$) and $\pa_t(H)\geq 2$ (as $H\not\in \cSD_t$), which implies that $G$ is not $\pa_2$-perfect, a contradiction, too. This shows that (a) implies (b). To show the converse implication, suppose that $S_t(G)$ is $\pa_1$-perfect, but $G$ is not $\pa_t$-perfect. Our aim is to show that $G$ contains a graph $H\in \cGD_t$ as an induced subgraph. Since $G$ is not $\pa_t$-perfect, there is an induced subgraph $G'$ of $G$ such that $\om_t(G')<\pa_t(G')$. Let $k=\om_t(G')=\om_1(S_t(G'))$ (see \eqref{Equ:om_t(S)=om}). Clearly, $S_t(G')$ is an induced subgraph of $S_t(G)$, and so $S_t(G')$ is $\pa_1$-perfect as $S_t(G)$ is $\pa_1$-perfect. Hence there is a coloring $\f\in \CO_1(S_t(G'),k)$. Then \eqref{Equ:co1=cot} implies that $\f\in \CO_t(tS_t(G'),k)$. As $k<\pa_t(G')$, $\f\not\in \CO_t(G',k)$ and so there is a color $c\in [1,k]$ such that $G'[\fin(c)]\not\in \cSD_t$. As $\f\in \CO_1(S_t(G'),k)$, $G'[\fin(c)]$ contains no $tK_2$ and so $G'[\fin(c)]\in \cMG_{t-1}$. Then we conclude that $G'[\fin(c)]$ contains an induced subgraph $H$ such that $H$ is connected and $\de(H)\geq t$. Then $H$ is an induced subgraph of $G$ belonging to $\cGD_t$, as required. This completes the proof that (b) implies (a). The equivalence of (b) and (c) follows from the SPGT.
\end{proof}

\begin{corollary}
\label{Corollary:St-col=gcol}
Let $G\in \cMG_t$ be a $\pa_t$-perfect graph with $t\geq 2$. Then every $\cSD_1$-coloring of $S_t(G)$ is a $\cSD_t$-coloring of $G$ with the same color set.
\end{corollary}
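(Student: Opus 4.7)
The plan is to re-run the lifting argument buried inside the (b)$\Rightarrow$(a) direction of Theorem~\ref{Theorem:pat-perfect}. Given a $\pa_t$-perfect $G \in \cMG_t$ and an arbitrary $\cSD_1$-coloring $\f: V(G)=V(S_t(G)) \to \Ga$ of $S_t(G)$, I want to show that the same mapping $\f$ (hence with the same color set $\Ga$) is an $\cSD_t$-coloring of $G$. I would argue by contradiction: assume that some color class $\fin(c)$ induces a subgraph $G[\fin(c)] \notin \cSD_t$.

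From there, two observations drive the argument. First, because $\f$ is an $\cSD_1$-coloring of $S_t(G)$, the set $\fin(c)$ is independent in $S_t(G)$, so no pair of its vertices is joined by $t$ parallel edges in $G$; hence $G[\fin(c)] \in \cMG_{t-1}$. Second, since $G[\fin(c)] \notin \cSD_t$, repeatedly deleting vertices of degree at most $t-1$ terminates in a nonempty induced subgraph with minimum degree at least $t$, and selecting one of its connected components yields an induced subgraph $H$ of $G$ that is connected, lies in $\cMG_{t-1}$, and satisfies $\de(H) \geq t$. Thus $H \in \cGD_t$.

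The contradiction is then immediate from the (a)$\Rightarrow$(b) direction of Theorem~\ref{Theorem:pat-perfect}: a $\pa_t$-perfect graph admits no induced subgraph from $\cGD_t$. Since every step here is already present inside the proof of Theorem~\ref{Theorem:pat-perfect}, I do not anticipate any real obstacle; the corollary is essentially a by-product of that proof, exactly as Corollary~\ref{Corollary:S2-col=gcol} was for Theorem~\ref{Theorem:pa2-perfect}. The only point that requires a line of explicit care is the extraction of the connected subgraph $H \in \cGD_t$ from $G[\fin(c)]$, and that extraction is standard.
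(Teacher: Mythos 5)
Your argument is correct and is precisely the one the paper intends: the corollary is stated as an immediate consequence of the proof of Theorem~\ref{Theorem:pat-perfect}, and the steps you re-run (independence of each color class in $S_t(G)$ gives $G[\fin(c)]\in \cMG_{t-1}$, non-membership in $\cSD_t$ yields a connected induced subgraph of minimum degree at least $t$, hence a member of $\cGD_t$, contradicting statement (b)) are exactly the ones appearing in the (b)$\Rightarrow$(a) direction of that proof. No gap; the extraction of the connected induced subgraph $H\in\cGD_t$ is handled correctly.
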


\begin{corollary}
\label{Corollary:t-optimal}
Let $t\geq 2$. Then there exists a polynomial time algorithm that computes for any $\pa_t$-perfect graph $G\in \cMG_t$ an optimal $\cSD_t$-coloring.
\end{corollary}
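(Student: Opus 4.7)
The plan is to mirror, for general $t\geq 2$, the algorithm sketched in the paragraph after Corollary~\ref{Corollary:S2-col=gcol} for the case $t=2$. The engine of the argument is that a $\pa_t$-perfect graph in $\cMG_t$ can be colored by passing to its ``support'' simple graph $S_t(G)$, coloring that simple graph using the polynomial-time algorithm of Gr\"otschel, Lov\'asz and Schrijver \cite{GLS1981} for perfect graphs, and then transporting the coloring back to $G$ without modification.

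Concretely, given the input graph $G\in \cMG_t$, my first step is to build the simple graph $G'=S_t(G)$ on the same vertex set by inserting an edge $uv$ exactly when $\mu_G(u,v)=t$; this requires only a single pass over vertex pairs and so runs in polynomial time. By the implication (a)$\Rightarrow$(b) of Theorem~\ref{Theorem:pat-perfect}, the hypothesis that $G$ is $\pa_t$-perfect guarantees that $G'$ is $\pa_1$-perfect. My second step is to invoke the algorithm of Gr\"otschel, Lov\'asz and Schrijver on $G'$, which produces in polynomial time an optimal $\cSD_1$-coloring $\f\in \CO_1(G',\pa_1(G'))$. Finally, Corollary~\ref{Corollary:St-col=gcol} tells me that the same $\f$, reinterpreted as a coloring of $V(G)=V(G')$, is an $\cSD_t$-coloring of $G$ using the same color set, and the number of colors it uses equals $\pa_1(G')=\om_1(S_t(G))=\om_t(G)=\pa_t(G)$ by \eqref{Equ:om_t(S)=om} together with the $\pa_1$-perfectness of $G'$ and the $\pa_t$-perfectness of $G$. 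Hence $\f$ is an optimal $\cSD_t$-coloring of $G$, and the whole procedure is a composition of polynomial-time steps.

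There is essentially no obstacle to overcome: the structural equivalence has already been proved in Theorem~\ref{Theorem:pat-perfect}, Corollary~\ref{Corollary:St-col=gcol} has already been established as its by-product, and \cite{GLS1981} supplies the polynomial-time subroutine on the simple perfect graph $G'$. The only point worth a moment of care is that the algorithm never needs to independently certify $\pa_t$-perfectness of~$G$; this property is assumed as part of the input, and enters the argument only through Theorem~\ref{Theorem:pat-perfect} (to guarantee that feeding $G'$ to the Gr\"otschel--Lov\'asz--Schrijver algorithm is legitimate) and through Corollary~\ref{Corollary:St-col=gcol} (to guarantee that the produced coloring is valid on $G$ itself).
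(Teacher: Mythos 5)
Your proposal is correct and follows exactly the route the paper intends: it is the verbatim generalization of the argument the paper gives in the paragraph following Corollary~\ref{Corollary:S2-col=gcol} for the case $t=2$, replacing $S_2$ by $S_t$ and invoking Theorem~\ref{Theorem:pat-perfect} and Corollary~\ref{Corollary:St-col=gcol} in place of their $t=2$ counterparts. The optimality count via $\pa_1(S_t(G))=\om_1(S_t(G))=\om_t(G)=\pa_t(G)$ is exactly the intended justification.
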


It is well known that many decision problems that are {\NPC} for the class $\cMG_1$ are polynomial solvable for the class of $\pa_1$-perfect graphs, e.g. the clique problem, the independent set problem, the coloring problem, and the clique covering problem (see \cite[Corollaries 9.3.32, 9.3.33, 9.4.8, Theorem 9.4.3]{GLS1993}).

Let $G\in \cMG_t$ be a $\pa_t$-perfect graph with $t\geq 2$. Then $G'=S_t(G)$ belongs to $\cMG_1$, and $G'$ is a $\pa_1$-perfect graph and no induced subgraph of $G$ belongs to $\cGD_t$ (by Theorem~\ref{Theorem:pat-perfect}(b)). Now let $I\subseteq V(G)$. We claim that $G[I]\in \cSD_t$ if and only if $G'[I]\in \cSD_1$ (i.e., $I$ is an independent set of $G'$). If $G[I]\in \cSD_t$, then $G[I]$ does not contain a $tK_2$ as a subgraph, and so $I$ is an independent set of $G'=S_t(G)$. Now assume that $I$ is an independent set of $G'$. Then $G[I]\in \cMG_{t-1}$ and, since no induced subgraph of $G$ belongs to $\cGD_t$, it follows that $G[I]\in \cSD_t$. By a result due to Gr\"otschel, Lov\'asz, and Schrijver \cite{GLS1981} it is possible to find an independent set $I$ of $G$ with $|I|=\al_1(G')$ in polynomial time. Hence, we have the following result.

\begin{corollary}
\label{Corollary:alpha_t}
Let $t\geq 2$. Then there exists a polynomial time algorithm that computes for any $\pa_t$-perfect graph $G\in \cMG_t$ an induced subgraph $H$ of $G$ such that $H\in \cSD_t$ and $|H|=\al_t(G)$.
\end{corollary}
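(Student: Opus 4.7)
The plan is to read off the algorithm directly from the paragraph preceding the corollary and to make the key equivalence $\al_t(G)=\al_1(S_t(G))$ precise, since this is the heart of the reduction. Given $\pa_t$-perfect $G\in\cMG_t$, I would first construct the simple graph $G'=S_t(G)$, which clearly can be done in polynomial time. By Theorem~\ref{Theorem:pat-perfect}(b), $G'$ is $\pa_1$-perfect, so the result of Gr\"otschel, Lov\'asz and Schrijver \cite{GLS1981} yields a polynomial time algorithm that computes a maximum independent set $I$ of $G'$, i.e.\ an $\cSD_1$-set of $G'$ with $|I|=\al_1(G')$. The algorithm then outputs $H=G[I]$.

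To justify correctness I would establish the identity
\begin{equation*}
\al_t(G)=\al_1(S_t(G))
\end{equation*}
by showing, for every $I\subseteq V(G)$, that $G[I]\in \cSD_t$ if and only if $I$ is independent in $G'=S_t(G)$. For the forward direction, if $G[I]\in \cSD_t$ then $G[I]$ has no $tK_2$ subgraph, hence no vertex pair $(u,v)\subseteq I$ has multiplicity $t$ in $G$, and so $I$ is independent in $G'$. For the converse, if $I$ is independent in $G'$, then $G[I]\in \cMG_{t-1}$; combined with Theorem~\ref{Theorem:pat-perfect}(b) applied to the induced subgraph $G[I]$ of the $\pa_t$-perfect graph $G$ (which forbids any induced subgraph in $\cGD_t$), I can conclude $G[I]\in \cSD_t$.

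The one slightly non-trivial step is the last implication: showing that $G[I]\in\cMG_{t-1}$ and the absence of induced $\cGD_t$-subgraphs force $G[I]\in\cSD_t$. I would argue by contradiction: if $G[I]\notin\cSD_t$, then some nonempty subgraph of $G[I]$ has minimum degree at least $t$; iteratively deleting vertices of degree less than $t$ from $G[I]$ yields a nonempty induced subgraph $H'$ of $G[I]$ with $\de(H')\ge t$, and taking any connected component of $H'$ produces an induced subgraph $H\subseteq G$ that is connected, lies in $\cMG_{t-1}$ (since $G[I]$ does), and has $\de(H)\ge t$. Hence $H\in\cGD_t$, contradicting Theorem~\ref{Theorem:pat-perfect}(b).

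With the equivalence in hand, the set $I$ returned by the Gr\"otschel--Lov\'asz--Schrijver procedure satisfies $|I|=\al_1(G')=\al_t(G)$ and $H=G[I]\in\cSD_t$, which is exactly what the corollary asserts. The main conceptual obstacle is really only the reduction $G[I]\in\cSD_t\iff I$ independent in $S_t(G)$; everything else is either a direct appeal to Theorem~\ref{Theorem:pat-perfect} or to the polynomial solvability of maximum independent set on $\pa_1$-perfect graphs.
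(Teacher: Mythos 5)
Your proposal is correct and follows essentially the same route as the paper: the paper's argument (given in the paragraph preceding the corollary) likewise forms $G'=S_t(G)$, invokes Theorem~\ref{Theorem:pat-perfect}(b) and the Gr\"otschel--Lov\'asz--Schrijver algorithm, and rests on the same equivalence that $G[I]\in\cSD_t$ if and only if $I$ is independent in $G'$. Your explicit degeneracy argument for the implication from $G[I]\in\cMG_{t-1}$ and the absence of induced $\cGD_t$-subgraphs to $G[I]\in\cSD_t$ merely fills in a detail the paper leaves implicit.
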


\section{A weak perfect graph theorem}

In 1972 Lov\'asz \cite{Lovasz72, Lovasz72b} proved the following result, which was proposed by A. Hajnal.

\begin{theorem} [Lov\'asz 1972]
\label{Theorem:pa_1-Lovasz}
A simple graph $G$ is $\pa_1$-perfect if and only if $|H|\leq \om_1(H)\al_1(H)$ for every induced subgraph $H$ of $G$.
\end{theorem}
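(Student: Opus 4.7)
The plan is to prove the two directions separately. The forward direction is immediate from \eqref{Equ:pa_t+al_t}: if $G$ is $\pa_1$-perfect then every induced subgraph $H$ of $G$ is $\pa_1$-perfect, so $\pa_1(H)=\om_1(H)$, and \eqref{Equ:pa_t+al_t} with $t=1$ gives $|H|\leq \pa_1(H)\al_1(H)=\om_1(H)\al_1(H)$.

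For the converse I would argue by contradiction. Assume every induced subgraph $H$ of $G$ satisfies $|H|\leq \om_1(H)\al_1(H)$ but $G$ is not $\pa_1$-perfect, and choose an induced subgraph $H$ of $G$ that is \emph{minimally imperfect}: $\pa_1(H)>\om_1(H)$ while every proper induced subgraph of $H$ is $\pa_1$-perfect. Using $\pa_1(H)\leq \pa_1(H-v)+1$ for every vertex $v$ together with minimality, I would first record the folklore properties $\pa_1(H)=\om+1$ and $\pa_1(H-v)=\om_1(H-v)\leq \om$, where $\om=\om_1(H)$. Setting $\al=\al_1(H)$, the task reduces to showing $|H|\geq \om\al+1$, which contradicts the hypothesis $|H|\leq \om\al$.

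To reach this lower bound I would assemble two matched lists, each of length $\om\al+1$, of maximum independent sets and maximum cliques of $H$. Fix a maximum independent set $I_0$ with $|I_0|=\al$; for each $v\in I_0$, the perfect graph $H-v$ admits an $\om$-coloring, producing a partition $I_v^1,\dots,I_v^\om$ of $V(H)\sm\{v\}$ into $\om$ independent sets. This yields the list
\[
\mathcal{F}=\{I_0\}\cup\{I_v^j : v\in I_0,\,1\leq j\leq \om\}
\]
of $\om\al+1$ maximum independent sets. Following Lov\'asz, the heart of the argument is to produce a corresponding list $\mathcal{K}=\{K_I : I\in\mathcal{F}\}$ of $\om$-cliques satisfying $K_I\cap I=\ems$ and $|K_I\cap I'|=1$ for every $I'\in\mathcal{F}\sm\{I\}$. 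Granted $\mathcal{K}$, let $M$ and $N$ be the $(\om\al+1)\times|H|$ incidence matrices of $\mathcal{F}$ and $\mathcal{K}$; then $MN^\top=J-\mathrm{Id}$, which is nonsingular, so $M$ has rank $\om\al+1$, forcing $|H|\geq \om\al+1$ as required.

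The main obstacle is constructing $\mathcal{K}$ with the prescribed intersection pattern. Simply choosing a maximum clique in the perfect graph $H-I$ produces a $K_I$ with $K_I\cap I=\ems$ but gives no control over $|K_I\cap I'|$. Lov\'asz's remedy is the \emph{replication lemma}: duplicating a vertex preserves $\pa_1$-perfectness. Blowing up the vertices of $H$ by multiplicities derived from $\mathcal{F}$ yields a larger perfect graph whose equality $\pa_1=\om_1$ can be pulled back to locate each $K_I$ with exactly one vertex in every other $I'\in\mathcal{F}$. Making the replication step precise and aligning the resulting cliques with $\mathcal{F}$ is the technical crux of the proof.
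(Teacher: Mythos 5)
The first thing to say is that the paper does not prove Theorem~\ref{Theorem:pa_1-Lovasz}: it is quoted from Lov\'asz, and the surrounding text (together with the paper's proof of the analogous Theorem~\ref{Theorem:pa_t-Lovasz}) makes clear the intended derivation --- the forward direction from \eqref{Equ:pa_t+al_t}, the backward direction by invoking the SPGT and checking $|H|>\om_1(H)\al_1(H)$ directly on odd holes and antiholes. Your forward direction coincides with that. For the backward direction you instead attempt the self-contained, SPGT-free argument of Lov\'asz/Gasparian; that is a legitimate and genuinely different (indeed stronger) route, but as written it stops exactly at the step you flag as the ``technical crux.''

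That step is a real gap, and the replication lemma is the wrong tool for it. You never say how the blown-up graph is to produce cliques with the prescribed intersection pattern, and the replication lemma itself (perfection is preserved under vertex duplication) is a nontrivial statement that would also need proof. In Gasparian's argument no replication is needed: the intersection pattern comes from an elementary counting fact. If $C$ is an $\om$-clique of $H$ and $\f$ is an $\om$-coloring of $H-v$, then $C$ meets a color class in at most one vertex, so $C$ meets every class exactly once if $v\notin C$, and misses exactly one class (meeting all others once) if $v\in C$. Now $K_{I_0}$ avoids $I_0$, hence avoids every $v\in I_0$, and therefore meets every class of every coloring exactly once; and each $K_{I_v^j}$ must contain $v$ (otherwise it would meet $I_v^j$), whence the unique class it misses in the coloring of $H-v$ is $I_v^j$, it meets $I_0$ only in $v$, and it avoids every $u\in I_0\sm\{v\}$, so it meets all classes of the other colorings exactly once. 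The existence of the $\om$-cliques themselves follows from $\pa_1(H-I)\geq \pa_1(H)-1=\om$ together with the perfection of the proper induced subgraph $H-I$. (Also, a small inaccuracy: the color classes $I_v^j$ need not be \emph{maximum} independent sets; this does not affect the argument.) With these substitutions your rank computation $MN^\top$ nonsingular, hence $|H|\geq \om\al+1$, is correct and the proof closes.
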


On the one hand, this result is an immediate consequence of the SPGT. On the other hand, Lov\'asz gave a proof avoiding the use of the SPGT, in fact he proved it before the SPGT was established. In 1996 Gasparian \cite{Gasparian96} applied an argument from linear algebra in order to give a very short proof of Lova\'sz' result. We shall use Theorem~\ref{Theorem:pat-perfect} to extend Lov\'asz' theorem to $\pa_t$-perfect graphs.

\begin{theorem}
\label{Theorem:pa_t-Lovasz}
Let $t\geq 2$. A graph $G\in \cMG_t$ is $\pa_t$-perfect if and only if $|H|\leq \om_t(H)\al_t(H)$ for every induced subgraph $H$ of $G$.
\end{theorem}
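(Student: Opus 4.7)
The plan is to reduce the statement to the classical Lovász characterization (Theorem~\ref{Theorem:pa_1-Lovasz}) via Theorem~\ref{Theorem:pat-perfect}. The forward direction is essentially immediate: if $G$ is $\pa_t$-perfect, then for every induced subgraph $H$ of $G$ we have $\pa_t(H)=\om_t(H)$, and combining this with \eqref{Equ:pa_t+al_t} yields $|H|\leq \pa_t(H)\al_t(H)=\om_t(H)\al_t(H)$. So all the work is in the converse.

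For the converse, I will assume that $|H|\leq \om_t(H)\al_t(H)$ holds for every induced subgraph $H$ of $G$, and verify the two conditions of Theorem~\ref{Theorem:pat-perfect}(b). First, suppose for contradiction that $G$ has an induced subgraph $H\in\cGD_t$. Since $H\in \cMG_{t-1}$, $H$ contains no $tK_2$, hence $\om_t(H)=1$. On the other hand $\cGD_t\cap \cSD_t=\ems$, so $V(H)$ itself is not an $\cSD_t$-set of $H$; therefore $\al_t(H)\leq |H|-1$. This gives $\om_t(H)\al_t(H)\leq |H|-1<|H|$, contradicting the hypothesis. Hence no induced subgraph of $G$ lies in $\cGD_t$.

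Second, I will show that $S_t(G)$ is $\pa_1$-perfect by checking Lovász's inequality for each of its induced subgraphs. Every such induced subgraph has the form $S_t(G)[Y]=S_t(G[Y])$ for some $Y\subseteq V(G)$; set $H=G[Y]$ and $H'=S_t(H)$. Then $|H'|=|H|$, and by \eqref{Equ:om_t(S)=om} we have $\om_1(H')=\om_t(H)$. The key comparison is between $\al_1(H')$ and $\al_t(H)$: if $I\subseteq V(H)$ is an $\cSD_t$-set of $H$, then $H[I]$ contains no $tK_2$ (since $tK_2\notin \cSD_t$), so $I$ is independent in $S_t(H)=H'$; this yields $\al_1(H')\geq \al_t(H)$. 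Combining with the hypothesis,
\begin{equation*}
|H'|=|H|\leq \om_t(H)\,\al_t(H)\leq \om_1(H')\,\al_1(H').
\end{equation*}
By Theorem~\ref{Theorem:pa_1-Lovasz}, $S_t(G)$ is $\pa_1$-perfect, and then Theorem~\ref{Theorem:pat-perfect} implies that $G$ is $\pa_t$-perfect.

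The main thing to watch is the direction of the inequality $\al_1(S_t(H))\geq \al_t(H)$: equality does not hold in general (an independent set of $S_t(H)$ only needs all pair-multiplicities to be at most $t-1$, while an $\cSD_t$-set must actually induce a strictly $t$-degenerate subgraph), but fortunately the inequality points the right way for this argument. The rest is a matter of stringing together \eqref{Equ:pa_t+al_t}, \eqref{Equ:om_t(S)=om}, Theorem~\ref{Theorem:pat-perfect} and Theorem~\ref{Theorem:pa_1-Lovasz}; no new combinatorial construction is needed.
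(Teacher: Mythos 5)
Your proposal is correct, and its forward direction coincides with the paper's; but your backward implication takes a genuinely different route. The paper argues by contraposition: assuming $G$ is not $\pa_t$-perfect, it invokes the forbidden-subgraph characterization (statement (c) of Theorem~\ref{Theorem:pat-perfect}, hence ultimately the SPGT) to extract an induced subgraph $H$ with $S_t(H)$ an odd cycle of length at least five, or its complement, or $H\in\cGD_t$, and then verifies $|H|>\om_t(H)\al_t(H)$ separately in each of the three cases (using $\al_t(H)\leq(\ell-1)/2$, respectively $\om_t(H)\leq(\ell-1)/2$, respectively $\al_t(H)\leq|H|-1$). You instead verify condition (b) of Theorem~\ref{Theorem:pat-perfect} directly: your $\cGD_t$-freeness argument is exactly the paper's third case run forwards, and for the $\pa_1$-perfection of $S_t(G)$ you transfer the hypothesis via $|S_t(H)|=|H|$, $\om_1(S_t(H))=\om_t(H)$ from \eqref{Equ:om_t(S)=om}, and the one-sided inequality $\al_1(S_t(H))\geq\al_t(H)$ (which you correctly justify and correctly note points the right way), and then apply Lov\'asz' Theorem~\ref{Theorem:pa_1-Lovasz}. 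Your route buys two things: it avoids the explicit odd-hole/antihole case analysis, and since Theorem~\ref{Theorem:pa_1-Lovasz} has an SPGT-free proof and only the easy implication (b)$\Rightarrow$(a) of Theorem~\ref{Theorem:pat-perfect} is needed, your argument for this theorem is in principle independent of the SPGT. The paper's route, by contrast, produces an explicit witness $H$ violating the inequality, which makes the failure of the min--max relation concrete. Both arguments are sound.
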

\begin{proof}
First assume that $G$ is $\pa_t$-perfect. If $H$ is an induced subgraph of $G$, then $H$ is $\pa_t$-perfect, too. Based on \eqref{Equ:pa_t+al_t}, we obtain that $|H|\leq \pa_t(H)\al_t(H)= \om_t(H)\al_t(H)$. Thus the forward implication is proved.

To prove the backward implication assume that $G$ is not $\pa_t$-perfect. It then suffices to show that $G$ has an induced subgraph $H$ such that $|H| >\om_t(H)\al_t(H)$. By Theorem~\ref{Theorem:pat-perfect}, $G$ has an induced subgraph $H$ such that $S_t(H)$ is an odd cycle of length at least five, or its complement, or $H\in \cGD_t$. If $S_t(H)$ is an odd cycle of length $\ell\geq 5$, then $\om_t(H)=2$ and $\al_t(H)\leq (\ell-1)/2$, which leads to $|H|=\ell >\om_t(H)\al_t(H)$. If $S_t(H)$ is the complement of an odd cycle of length $\ell\geq 5$, then $\al_t(H)=2$ and $\om_t(H)\leq (\ell-1)/2$, which leads to $|H|=\ell >\om_t(H)\al_t(H)$. If $H\in \cGD_t$, then $\om_t(H)=1$ and $\al_t(H)\leq |H|-1$, which leads to $|H| >\om_t(H)\al_t(H)$.
\end{proof}

Note that if $G$ is a simple graph, then $H$ is an induced subgraph of $G$ if and only if $\overline{H}$ is an induced subgraph of $\overline{G}$. Furthermore, if $H$ is an induced subgraph of $G$, then $\al_1(\overline{H})=\om_1(H)$ and $\om_1(\overline{H})=\al_1(H)$. Consequently, Theorem~\ref{Theorem:pa_1-Lovasz} implies that a simple graph is $\pa_1$-perfect if and only if its complement is $\pa_1$-perfect. This immediate corollary of Lov\'asz' theorem, nowadays known as the Weak Perfect Graph Theorem (WPGT), was also conjectured by Berge in the 1960s. The WPGT has no direct counterpart for $\pa_t$-perfect graphs. However, we shall proof a result  for $\pa_2$-perfect graphs that might be considered as a weak $\pa_2$-perfect graph theorem (see Theorem~\ref{Theorem:wpgt}).

First we need some notation. Let $G$ be an arbitrary graph. We denote by $S(G)$ the \DF{underlying simple graph} of $G$, that is, $V(S(G))=V(G)$ and $E(S(G))=\set{uv}{\mu_G(u,v)\geq 1}$. Note that a vertex set $X\subseteq V(G)$ is a clique of $G$
if and only if $S(G)[X]$ is a complete graph. A subgraph $C$ of $G$ is called a \DF{simple cycle} of $G$ if $C$ is a cycle and any pair $(u,v)$ of vertices that are adjacent in $C$ satisfies $\mu_G(u,v)=1$. We say that $G$ is a \DF{normal graph} if $G$ contains no simple cycle whose vertex set is a clique of $G$. If $G$ is a normal graph and $H=S(G)$ is its underlying simple graph, then we also say that $G$ is a \DF{clique-acyclic inflation} of $H$.

\begin{proposition}
\label{Proposition:acyclic}
Let $G\in \cMG_2$ be a graph. Then $G$ contains no induced cycle if and only if $\overline{G}^2$ is a normal graph.
\end{proposition}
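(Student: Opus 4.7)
The plan is to exploit the bijective correspondence between edge multiplicities in $G$ and $\overline{G}^2$: since $G\in \cMG_2$, we have $\mu_{\overline{G}^2}(u,v)=2-\mu_G(u,v)\in\{0,1,2\}$ for every pair of distinct vertices. Two consequences drive the whole argument. First, a vertex set $X$ is a clique of $\overline{G}^2$ (i.e., $\mu_{\overline{G}^2}(u,v)\geq 1$ for all $u\neq v$ in $X$) if and only if $\mu_G(u,v)\leq 1$ for all $u\neq v$ in $X$, which means $G[X]$ is simple. Second, a cyclic sequence $v_1v_2\cdots v_nv_1$ is a simple cycle of $\overline{G}^2$ precisely when $\mu_G(v_i,v_{i+1})=1$ for every consecutive pair (indices mod $n$). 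I would record these two observations at the start of the proof.

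For the direction ``$\overline{G}^2$ not normal $\Rightarrow$ $G$ has an induced cycle,'' I would take a simple cycle $C=v_1v_2\cdots v_nv_1$ of $\overline{G}^2$ whose vertex set $X$ forms a clique of $\overline{G}^2$. By the first observation $G[X]$ is a simple graph, and by the second every consecutive pair $v_iv_{i+1}$ is an edge of $G[X]$; thus $G[X]$ is a simple graph that contains a (possibly chorded) spanning cycle of length~$n$. Note that $n\geq 3$: a ``simple cycle'' of length~$2$ would need $\mu_{\overline{G}^2}(v_1,v_2)=1$ while simultaneously carrying two parallel edges in $C$, which is impossible. Then a shortest cycle of $G[X]$ is chordless, has length at least~$3$, and is induced in $G[X]$ and hence in $G$.

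For the converse direction ``$G$ has an induced cycle $\Rightarrow$ $\overline{G}^2$ not normal,'' I would start with an induced cycle $v_1v_2\cdots v_nv_1$ of length $n\geq 3$ in $G$. Then $\mu_G(v_i,v_{i+1})=1$ for consecutive pairs and $\mu_G(v_i,v_j)=0$ for non-consecutive pairs, so in $\overline{G}^2$ the consecutive pairs have multiplicity $1$ and the non-consecutive pairs have multiplicity $2$. Hence $X=\{v_1,\ldots,v_n\}$ is a clique of $\overline{G}^2$, and the consecutive edges form a simple cycle of $\overline{G}^2$ whose vertex set is this clique, contradicting normality.

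The only step requiring a moment of care is the ``shortest cycle'' argument in the forward direction, where one must verify that a shortest cycle inside a simple graph is automatically chordless (a standard fact) and that simple cycles of $\overline{G}^2$ necessarily have length at least three (immediate from the definition of ``simple cycle'' combined with $\mu=1$). Everything else is a direct translation through the identity $\mu_G+\mu_{\overline{G}^2}=2$, so I do not anticipate any real obstacle.
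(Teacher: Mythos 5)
Your proof is correct and follows essentially the same route as the paper: both directions translate through the identity $\mu_G(u,v)+\mu_{\overline{G}^2}(u,v)=2$, and the nontrivial direction extracts a chordless cycle by a minimality argument. The only cosmetic difference is that the paper minimizes the length of the simple cycle inside $\overline{G}^2$ whereas you pass to the simple graph $G[X]$ and take a shortest cycle there; these are interchangeable, and your version makes the ``shortest cycle is chordless'' step more explicit.
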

\begin{proof}
First assume that $G$ contains an induced cycle $C$. Then $\overline{C}^2$ is a simple cycle of $\overline{G}^2$ whose vertex set is a clique of $\overline{G}^2$, and so $\overline{G}^2$ is not normal. Now assume that $\overline{G}^2$ is not normal. Then $\overline{G}^2$contains a simple cycle whose vertex set is a clique of $\overline{G}^2$. Let $C$ be such a cycle whose length is minimum. Then $\overline{C}^2$ is an induced cycle of $G$.
\end{proof}

\begin{theorem}
\label{Theorem:wpgt}
Let $G\in \cMG_2$ be a graph. Then $G$ is a $\pa_2$-perfect graph if and only if $\overline{G}^2$ is a clique-acyclic inflation of a perfect graph.
\end{theorem}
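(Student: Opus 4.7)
The plan is to reduce the biconditional to a chain of already-established equivalences: Theorem~\ref{Theorem:pa2-perfect}(a)$\Leftrightarrow$(b), Proposition~\ref{Proposition:acyclic}, and the Weak Perfect Graph Theorem. The two conditions making up "$\pa_2$-perfect" in Theorem~\ref{Theorem:pa2-perfect}(b) will match up, one to one, with the two parts of the phrase "clique-acyclic inflation of a perfect graph" applied to $\overline{G}^2$.

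First I would unpack the target condition: by the definition in the excerpt, $\overline{G}^2$ is a clique-acyclic inflation of a perfect graph if and only if $\overline{G}^2$ is normal and its underlying simple graph $S(\overline{G}^2)$ is $\pa_1$-perfect. Next I would compute $S(\overline{G}^2)$ explicitly. For any two distinct vertices $u,v\in V(G)$ we have $\mu_{\overline{G}^2}(u,v)=2-\mu_G(u,v)$, and thus $uv\in E(S(\overline{G}^2))$ iff $\mu_G(u,v)\in\{0,1\}$ iff $\mu_G(u,v)\neq 2$ iff $uv\notin E(S_2(G))$. This yields the identity $S(\overline{G}^2)=\overline{S_2(G)}$, where the complement is the ordinary complement of a simple graph.

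Now I would invoke the Weak Perfect Graph Theorem, noted in the text just above the statement as an immediate corollary of Lov\'asz's Theorem~\ref{Theorem:pa_1-Lovasz}, to conclude that $S(\overline{G}^2)=\overline{S_2(G)}$ is $\pa_1$-perfect if and only if $S_2(G)$ is $\pa_1$-perfect. Combined with Proposition~\ref{Proposition:acyclic}, which says that $\overline{G}^2$ is normal if and only if $G$ contains no induced cycle of length at least three, the following chain of equivalences is forced by Theorem~\ref{Theorem:pa2-perfect}: $G$ is $\pa_2$-perfect, iff $S_2(G)$ is $\pa_1$-perfect and $G$ has no induced cycle of length at least three, iff $\overline{S_2(G)}=S(\overline{G}^2)$ is $\pa_1$-perfect and $\overline{G}^2$ is normal, iff $\overline{G}^2$ is a clique-acyclic inflation of a perfect graph.

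There is no serious obstacle; the argument is entirely a bookkeeping reduction to previously proved results. The only points that need a brief verification are the identity $S(\overline{G}^2)=\overline{S_2(G)}$ (which requires checking the cases $\mu_G(u,v)=0,1,2$ separately) and the correct application of the Weak Perfect Graph Theorem to the simple graph $S_2(G)$. Everything else is definitional unpacking.
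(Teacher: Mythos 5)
Your proposal is correct and follows essentially the same route as the paper's own proof: establish the identity $S(\overline{G}^2)=\overline{S_2(G)}$, then chain together Theorem~\ref{Theorem:pa2-perfect}(b), Proposition~\ref{Proposition:acyclic}, and the WPGT. The only difference is that you spell out the verification of $S(\overline{G}^2)=\overline{S_2(G)}$ and the unpacking of the definition of a clique-acyclic inflation, which the paper leaves implicit.
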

\begin{proof}
Since $G\in \cMG_2$, it follows that $S(\overline{G}^2)=\overline{S_2(G)}$. By combining
Theorem~\ref{Theorem:pa2-perfect} with Proposition~\ref{Proposition:acyclic}, $G$ is a $\pa_2$-perfect graph if and only if $S_2(G)$ is a $\pa_1$-perfect graph and $\overline{G}^2$ is a normal graph. By the WPGT, this implies that $G$ is a $\pa_2$-perfect graph if and only if $\overline{S_2(G)}$ is a $\pa_1$-perfect graph and $\overline{G}^2$ is a normal graph, which is equivalent to $\overline{G}^2$ is a clique-acyclic inflation of a perfect graph.
\end{proof}

\section{Recognizing $\pa_2$-perfect graphs}

In 2005, Chudnovsky, Cornu\'ejols, Liu, Seymour, Vu\v{s}kovi\'c \cite{CCLSV05} proved that the decision problem whether a simple graph does not contain an odd cycle of length at least five, or its complement, as an induced subgraph belongs to the complexity class {\PP}. So the SPGT then implies that the recognition problem for perfect graphs is polynomial time solvable. To test whether a graph $G\in \cMG_2$ is $\pa_2$-perfect, by Theorem~\ref{Theorem:pa2-perfect} we have to test
\begin{enumerate}
  \item whether $S_2(G)$ is $\pa_1$-perfect, and
  \item whether $G$ does not contain an induced cycle of length at least three.
\end{enumerate}

While the first can indeed be tested efficiently (by the SPGT and \cite{CCLSV05}), the second is a co-{\NPC} problem (see Theorem~\ref{Theorem:NP-complete}). In 2012 Bang-Jensen, Havet, and Trotignon \cite[Theorem 11]{BangJensenHT2012} proved that the decision problem whether a digraph contains an induced directed cycle of length at least three is {\NPC}. Here a digraph may have antiparallel arcs, but no parallel arcs. We can use the same reduction as in \cite[Theorem 11]{BangJensenHT2012} (just by ignoring the directions) to prove the following result.

\begin{figure}[htbp]
\centering
\begin{tikzpicture}[vertex/.style={circle,minimum size=2mm,very thick, draw=black, fill=black, inner sep=0mm}]
\node[draw=none,minimum size=3cm,regular polygon,regular polygon sides=4, rotate=45] (a) {};

\node[vertex, label={above:$\overline{x}_i$}] (a1) at (a.corner 1) {};
\node[vertex, label={left:$a_i$}] (a2) at (a.corner 2){};
\node[vertex, label={below:$x_i$}] (a3) at (a.corner 3){};
\node[vertex, label={right:$b_i$}] (a4) at (a.corner 4){};

\path[-]
(a1) edge (a2)
(a2) edge (a3)
(a3) edge (a4)
(a4) edge (a1)
(a1) edge [line width=.8mm] (a3);

\begin{scope}[xshift=7cm]
\node[vertex, label={160:$\ell_{j2}$}](a0) {};
\node[draw=none,minimum size=3cm,regular polygon,regular polygon sides=4, rotate=45] (a) {};

\node[vertex, label={above:$\ell_{j1}$}] (a1) at (a.corner 1) {};
\node[vertex, label={left:$c_j$}] (a2) at (a.corner 2){};
\node[vertex, label={below:$\ell_{j3}$}] (a3) at (a.corner 3){};
\node[vertex, label={right:$d_j$}] (a4) at (a.corner 4){};

\path[-]
(a1) edge (a2)
(a2) edge (a3)
(a3) edge (a4)
(a2) edge (a0)
(a0) edge (a4)
(a4) edge (a1)
(a1) edge [line width=.8mm] (a0)
(a0) edge [line width=.8mm] (a3)
(a1) edge [line width=.8mm, bend left] (a3);

\end{scope}

\end{tikzpicture}
\caption{Variable gadget $VG(i)$ and claus gadget $CG(j)$. Bold edges represent parallel edges.}
\label{Figure:Gadgets}   
\end{figure}
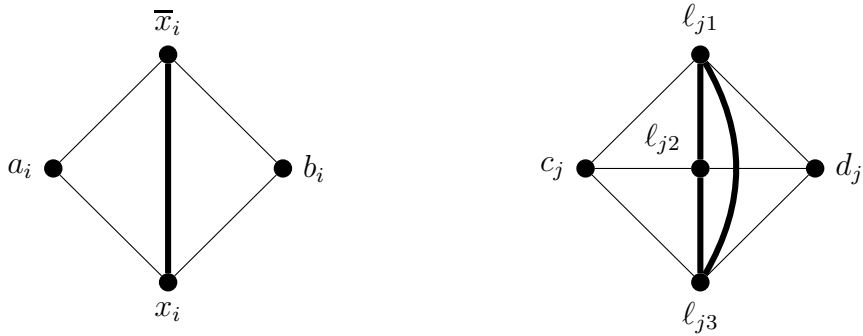

\begin{theorem}
\label{Theorem:NP-complete}
The decision problem whether a graph of $\cMG_2$ contains an induced cycle of length at least three is {\NPC}.
\end{theorem}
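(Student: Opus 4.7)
The plan is to establish NP-membership and NP-hardness separately. Membership in NP is immediate: a nondeterministic algorithm guesses a vertex subset $X\subseteq V(G)$ and verifies in polynomial time that $|X|\geq 3$, that every vertex of $X$ has degree exactly two in $G[X]$, that every edge of $G[X]$ has multiplicity one, and that $G[X]$ is connected. These conditions together certify that $G[X]$ is a cycle of length at least three.

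For NP-hardness I would follow, essentially verbatim, the reduction given by Bang-Jensen, Havet, and Trotignon in \cite[Theorem~11]{BangJensenHT2012}, converting their digraph into an element of $\cMG_2$ by forgetting arc orientations so that antiparallel arcs become multiplicity-two edges. Starting from a 3-SAT instance with variables $x_1,\ldots,x_n$ and clauses $C_1,\ldots,C_m$, one builds a graph $G\in\cMG_2$ by attaching, for each variable $x_i$, a copy of the variable gadget $VG(i)$ of Figure~\ref{Figure:Gadgets}, and, for each clause $C_j$, a copy of the clause gadget $CG(j)$, identifying each literal vertex $\ell_{jk}$ of $CG(j)$ with the corresponding $x_i$- or $\bar{x}_i$-vertex of the appropriate variable gadget. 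The multiplicity-two edges play a purely exclusionary role: because in any induced cycle of length at least three every edge must have multiplicity one and every vertex must have degree exactly two, no two endpoints of a double edge can lie on the cycle. Inside a variable gadget this forbids using both $x_i$ and $\bar{x}_i$, and inside a clause gadget it forbids using more than one of the three literal vertices $\ell_{j1},\ell_{j2},\ell_{j3}$.

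The correctness claim to prove is that $G$ contains an induced cycle of length at least three if and only if the 3-SAT instance is satisfiable. For the "if" direction, a satisfying assignment $\pi$ yields an induced cycle that traverses each variable gadget on its $x_i$-side or $\bar{x}_i$-side according to $\pi$ and enters each clause gadget through a literal vertex satisfied by $\pi$. For the "only if" direction, an induced cycle $C$ induces an assignment by setting $x_i$ true exactly when $x_i\in V(C)$, and the exclusionary structure inside every clause gadget visited by $C$ forces at least one of its three literals to be true under this assignment.

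I expect the main obstacle to lie in the correctness verification rather than the construction. The heart of the argument is a careful case analysis showing that the multiplicity-two edges, which translate the antiparallel arcs of the original digraph reduction, simultaneously rule out every induced cycle corresponding to an inconsistent variable choice or an unsatisfied clause, while not obstructing any cycle that encodes a genuine satisfying assignment. Once the gadget analysis of \cite{BangJensenHT2012} is reinterpreted through the induced-subgraph conditions for $\cMG_2$, the reduction transfers cleanly, but the translation must be performed with care so that no chord-free cycle is gained or lost in the passage from the directed to the undirected setting.
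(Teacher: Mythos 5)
Your proposal follows the paper's proof essentially verbatim: both reduce from 3-SAT via the gadget construction of Bang-Jensen, Havet and Trotignon with the orientations forgotten, so that antiparallel arcs become double edges whose two endpoints can never both lie on an induced cycle of length at least three, and the paper likewise leaves the gadget-by-gadget correctness check to the reader (with the easy NP-membership argument omitted entirely). The one point where you deviate is the clause--variable connection: the paper does not identify the literal vertex $\ell_{jk}$ of $CG(j)$ with a vertex of the variable gadget, but instead adds a double edge between $\ell_{jk}$ and the \emph{complementary} vertex $\overline{\ell_{jk}}$ of $VG(i)$; you should adopt that version, since an identified vertex would acquire neighbours in both gadgets and could not have degree two on an induced cycle traversing both, whereas the double edge is exactly what forbids a cycle from certifying a clause by a literal while simultaneously encoding that literal as false in the variable gadget.
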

\begin{proof}
The reduction is from 3-SAT. Let $I$ be an instance of 3-SAT with variable $x_1, x_2, \ldots, x_n$ and clauses $C_1, C_2, \ldots, C_m$, where $n,m\in \nat$ and
$$C_j=(\p_{j1}\vee \p_{j2} \vee \p_{j3}) \mbox{ with } \p_{jk}\in \{x_1, x_2, \ldots, x_n, \bar{x}_1, \bar{x}_2, \ldots \bar{x}_n\}.$$
For each variable $x_i$ let $VG(i)$ be the variable gadget, and for each clause $C_j$ let $CG(j)$ be the clause gadget, as shown in Figure~\ref{Figure:Gadgets}. From the union of all the gadgets we form a graph $G(I)$ by adding edges according to the following two rules:
\begin{enumerate}
  \item We add the single edges $b_ia_{i+1}$ (with $i\in [1,n-1]$), $b_nc_1$, $d_jc_{j+1}$ (with $j\in [1,m-1]$), and $d_ma_1$.
  \item For each literal $\p_{jk}$ (which is either $x_i$ or $\bar{x}_i$) we add two parallel edges joining the vertex $\p_{jk}$ of the clause gadget $CG(j)$ with the vertex $\overline{\p_{jk}}$ in the variable gadget $VG(i)$.
\end{enumerate}
Similar as in the proof of \cite[Theorem 11]{BangJensenHT2012} it is easy to show that $G(I)$ has an induced cycle of length at least three if and only if $I$ is satisfiable.
\end{proof}

\begin{corollary}
\label{Corollary:NP-complete}
The decision problem whether a graph from $\cMG_2$ is $\pa_2$-perfect is co-{\NPC}.
\end{corollary}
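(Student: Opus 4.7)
The plan is to prove membership in co-NP and then co-NP-hardness.

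For membership in co-NP: by Theorem~\ref{Theorem:pa2-perfect}(d), if $G \in \cMG_2$ is not $\pa_2$-perfect, then $G$ has an induced subgraph $H$ such that either $H$ is a simple cycle of length at least three, or $S_2(H)$ is an odd cycle of length at least five or its complement. In every case $|H| \leq |G|$, and given $H$ one can verify in polynomial time that $G[V(H)] = H$ and that $H$ is of the prescribed forbidden type. Such an $H$ therefore furnishes a polynomial-size certificate of non-$\pa_2$-perfection, so the problem lies in co-NP.

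For co-NP-hardness, I would reduce from the NP-complete problem of Theorem~\ref{Theorem:NP-complete}. By Theorem~\ref{Theorem:pa2-perfect}, a graph $G \in \cMG_2$ is $\pa_2$-perfect if and only if (i) $S_2(G)$ is $\pa_1$-perfect and (ii) $G$ has no induced cycle of length at least three. Since (i) can be tested in polynomial time via the SPGT together with the algorithm of \cite{CCLSV05}, it suffices to produce, in polynomial time from any instance of the induced-cycle problem, a graph $G' \in \cMG_2$ such that $G'$ contains an induced cycle of length at least three if and only if $G$ does, and such that $S_2(G')$ is guaranteed to be $\pa_1$-perfect. For such a $G'$, the equivalence ``$G'$ is $\pa_2$-perfect iff $G'$ has no induced cycle of length at least three'' is exactly the sought polynomial-time reduction from induced-cycle to the complement of $\pa_2$-perfection.

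The main technical obstacle is that the graph $G(I)$ built in the proof of Theorem~\ref{Theorem:NP-complete} does not, in general, have $\pa_1$-perfect $S_2$: already three clauses sharing variables cyclically can be shown to create an induced nine-cycle in $S_2(G(I))$ (traversing, for each clause, one triangle edge plus the two double edges connecting clause-literals to a variable gadget). To circumvent this, I would redesign the gadgets so that the subgraph of double edges becomes bipartite (hence $\pa_1$-perfect) without destroying the equivalence between satisfiability and existence of an induced cycle of length at least three. Concretely, the triangle of double edges inside each clause gadget and the parallel edges added by the second rule of the construction have to be replaced by structures on auxiliary vertices that preserve the forbidden-pair property---no two specified vertices may jointly lie on an induced simple cycle of length at least three---while contributing no new double edges. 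Verifying that such gadgets exist and that the modified reduction still mirrors the case analysis of \cite{BangJensenHT2012} is the delicate step; once it is carried out, co-NP-hardness follows.
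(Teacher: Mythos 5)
Your co-{\NP} membership argument is correct and supplies a step the paper leaves implicit: by Theorem~\ref{Theorem:pa2-perfect}(d) a forbidden induced subgraph is a polynomial-size, polynomially verifiable witness of imperfection. For hardness your overall strategy is also the paper's: reuse the 3-SAT reduction of Theorem~\ref{Theorem:NP-complete} and arrange that $S_2$ of the constructed graph is $\pa_1$-perfect, so that by Theorem~\ref{Theorem:pa2-perfect} being non-$\pa_2$-perfect becomes equivalent to containing an induced cycle of length at least three, hence to satisfiability of the instance.

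The difference lies in how that arrangement is achieved, and here your proposal has a genuine gap. Your diagnosis of the obstacle is sound: each clause gadget contributes a triangle of double edges to $S_2(G(I))$, and three clauses sharing pairwise distinct, pairwise non-complementary literals in a cyclic pattern do yield the induced $C_9$ (an odd hole) in $S_2(G(I))$ that you describe; such triples already occur in standard unsatisfiable instances, so $S_2(G(I))$ really can fail to be $\pa_1$-perfect. But having identified the problem, you only promise a cure: ``redesign the gadgets so that the subgraph of double edges becomes bipartite while preserving the equivalence with satisfiability'' is the entire content of your hardness argument, and the new gadgets are neither exhibited nor verified. That redesign is precisely where the work lies --- one must re-check the forbidden-pair properties and the case analysis of the underlying reduction --- so as written the reduction is incomplete. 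For comparison, the paper disposes of the issue in one sentence, asserting that after subdividing the single edge $d_ma_1$ the graph $S_2(G(I))$ is bipartite and hence $\pa_1$-perfect; you should be aware that this assertion is itself dubious, since subdividing a single edge does not alter $S_2(G(I))$ and the double-edge triangles (and the odd holes you found) remain. So your skepticism about the construction is warranted and points to a real weakness in the published argument, but your proposal stops exactly where the missing construction would have to begin.
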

\begin{proof}
We use the same reduction as in the proof of Theorem~\ref{Theorem:NP-complete}. So for an instance $I$ of 3-SAT we construct the graph $G(I)$. If necessary, we subdivide the edge $d_ma_1$, so the the graph $S_2(G(I))$ is bipartite and hence $\pa_1$-perfect. Then Theorem~\ref{Theorem:pa2-perfect} implies that $G(I)$ is $\pa_2$-perfect if and only if $G(I)$ has no induced cycle of length at least three.
\end{proof}

\section{Concluding remarks}

The results about $\pa_2$-perfect graphs, in particular, Theorems~\ref{Theorem:pa2-perfect} and \ref{Theorem:wpgt}, resemble the results about perfect digraphs (with respect to the directed chromatic number) obtained in 2015 by Andres and Hochst\"att\-ler \cite{AndresH2014}. The characterization of perfect digraphs was used by Bang-Jensen, Bellito, Schweser, and Stiebitz \cite{BangJensenBSS2019} to establish a Haj\'os-type result for the dichromatic number of digraphs. Andres \cite{Andres2012} started to investigate game-perfect graphs, based on maker-breaker games. This concept can also be extended to the point partition number.

\end{document}